\documentclass{amsart}
\usepackage{amsmath, amsthm}
\textwidth 12cm \textheight 18.5cm
\usepackage{hyperref}
\hypersetup{backref,urlcolor= black,colorlinks=true}

\usepackage{graphics}
\DeclareGraphicsExtensions{ext1,ext2,...,extn}

\usepackage{epsfig}

\title[\sc A Geometric Criterion for the Finite Generation of the Cox Ring ...]{
A Geometric Criterion for the Finite Generation of the Cox Ring of
Projective Surfaces}

\author[\sc De La Rosa Navarro]{Brenda De La Rosa Navarro}
\address{Instituto de F\'{\i}sica y Matem\'aticas (IFM)\\
Universidad Michoacana de San Nicol\'as de Hidalgo\\
Edificio C-3, Ciudad  Universitaria. C. P. 58040 Morelia\\
Michoac\'an, M\'exico} \email{brenda@ifm.umich.mx}

\author[\sc Lahyane]{Mustapha Lahyane}
\address{Instituto de F\'{\i}sica y Matem\'aticas (IFM)\\
Universidad Michoacana de San Nicol\'as de Hidalgo\\
Edificio C-3, Ciudad  Universitaria. C. P. 58040 Morelia\\
Michoac\'an, M\'exico} \email{lahyane@ifm.umich.mx}

\author[\sc Moreno Mej\'{\i}a]{Israel Moreno Mej\'{\i}a}
\address{Instituto de Matem\'aticas\\
Universidad Nacional Aut\'onoma de M\'exico\\
\'Area de la Investigaci\'on Cient\'{\i}fica, Circuito Exterior,
Ciudad Universitaria, Coyoac\'an\\
C.P. 04510, M\'exico D.F.\\
M\'exico} \email{israel@matem.unam.mx}

\author[\sc Osuna Castro]{Osvaldo Osuna Castro}
\address{Instituto de F\'{\i}sica y Matem\'aticas (IFM)\\
Universidad Michoacana de San Nicol\'as de Hidalgo\\
Edificio C-3, Ciudad  Universitaria. C. P. 58040 Morelia\\
Michoac\'an, M\'exico} \email{osvaldo@ifm.umich.mx}


\subjclass[2000]{Primary 14J26; Secondary 14F17, 14F05 }


\newtheoremstyle{theorem}
  {10pt}          
  {10pt}  
  {\sl}  
  {\parindent}     
  {\bf}  
  {. }    
  { }    
  {}     
\theoremstyle{theorem}
\newtheorem{theorem}{Theorem}
\newtheorem{corollary}[theorem]{Corollary}
\newtheorem{lemma}[theorem]{Lemma}

\newtheoremstyle{defi}
  {10pt}          
  {10pt}  
  {\rm}  
  {\parindent}     
  {\bf}  
  {. }    
  { }    
  {}     
\theoremstyle{defi}
\newtheorem{definition}[theorem]{Definition}



\begin{document}

\maketitle

\begin{abstract}
The aim is to give a geometric characterization of the finite
generation of the Cox ring of anticanonical rational surfaces. This
characterization is encoded in the finite generation of the
effective monoid. Furthermore, we prove that in the case of a smooth
projective rational surface having a negative multiple of its
canonical divisor with only two linearly independent global sections
(e.g., an elliptic rational surface), the finite generation is
equivalent to the fact that there are only a finite number  of
smooth projective rational curves of self-intersection $-1$. The
ground field is assumed to be algebraically closed of arbitrary
characteristic.
\end{abstract}

\section{Introduction}

In \cite{GM}, Galindo and Monserrat characterize the smooth
projective surfaces $Z$ defined over an algebraically closed field
$k$ with finitely generated Cox rings (see the next paragraph for
the definition) by means of the finiteness  of the set of integral
curves on $Z$ of negative self-intersection and the existence of a
finitely generated $k-$algebra containing two $k-$algebras
associated  naturally  to $Z$, see \cite[Theorem 1, page 94]{GM}.
The aim of this work is to give an equivalent characterization of
the finite generation of the Cox ring totally based on the geometry
of the surface and to apply the criterion to some  classes of smooth
projective rational surfaces, e.g. the anticanonical ones (i.e.,
those rational surfaces holding an effective anticanonical divisor)
and the surfaces constructed  in \cite{CPR1}, \cite{CPR2},
\cite{GM1} and \cite{GM2}; establishing thus the geometric nature of
our characterization. For some purely algebraic features of the Cox
ring of a variety, see \cite{EKW}.

Following Hu and Keel \cite{HK}, the Cox  (or the total coordinate)
ring of a smooth projective variety $V$ defined over an
algebraically closed field $k$ is the $k-$algebra defined as
follows:

$$Cox(V)=\bigoplus_{(n_{1}, \ldots, n_{r}) \in {\mathbb Z}^{r}} H^{0}(V, {\mathcal{O}}(L_{1}^{n_{1}} \otimes
\ldots \otimes L_{r}^{n_{r}})).$$

Here $(L_{1}, \ldots, L_{r})$ is a basis of the $\mathbb Z$-module
$Pic(V)$ of classes of invertible sheaves on $V$ modulo isomorphisms
under the tensor product, and we have assumed that the linear and
numerical equivalences on the group of Cartier divisors on $V$ are
the same, such assumption is satisfied for example for the smooth
projective rational surfaces $V$.

An interesting (but still) open  problem is to classify
theoretically and/or effectively and constructively all smooth
projective rational surfaces $S$ for which the $k-$algebra $Cox(S)$
is finitely generated. Masayoshi Nagata (see \cite{MN}) showed that
the surface $Z$ obtained by blowing up of the projective plane $\Bbb
P^{2}$ at nine or more points in general position has an infinite
number of $(-1)-$curves (see also  \cite{L4}, \cite{L0}, \cite{L2},
\cite{LH}, \cite{R}, \cite{SM}, \cite{UPRM} and \cite{L3}  for cases
when the points need not be in general position), consequently its
Cox ring $Cox(Z)$ is not finitely generated. Here a $(-1)-$curve on
$Z$ means a smooth projective curve on $Z$ of self-intersection
equal to $-1$. Note that in this example, the effective monoid
$M(Z)$ of $Z$ is also not finitely generated, where $M(Z)$ stands
for the set of elements of the Picard group $Pic(Z)$ of $Z$ having
at least a nonzero global section.

In this paper we mainly look for those smooth projective rational
surfaces $S$  for which the finite generation of $Cox(S)$  is
equivalent to the finite generation of $M(S)$. Our two main results,
Theorems \ref{main} and \ref{main2} below which are derived from
Theorem \ref{criterion}, give a partial answer. By the way, we have
been informed by a referee that in the characteristic zero case,
Theorem \ref{criterion} was obtained in \cite{AHL} using a different
approach.

\begin{theorem}
\label{main} Let $S$ be a smooth projective rational surface defined
over an algebraically closed field $k$ of arbitrary characteristic
such that the invertible sheaf associated to the divisor $-K_{S}$
has a nonzero global section.

The following assertions are equivalent:
\begin{enumerate}
\item $Cox(S)$ is finitely generated.
\item  $M(S)$ is finitely generated.
\item $S$ has only a finite number of $(-1)$-curves and only a
finite number of $(-2)$-curves.
\end{enumerate}
Here $K_{S}$ denotes a canonical divisor on $S$.
\end{theorem}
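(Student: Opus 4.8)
The plan is to prove the chain of equivalences by first disposing of (1) $\Leftrightarrow$ (2) and then establishing (2) $\Leftrightarrow$ (3), the latter being the geometric heart of the statement. For the equivalence of (1) and (2) I would simply invoke Theorem \ref{criterion}, which reduces the finite generation of $Cox(S)$ to that of the effective monoid $M(S)$ under the standing hypotheses (in particular linear and numerical equivalence agree on a rational surface). Thus the whole problem becomes the assertion that $M(S)$ is finitely generated if and only if $S$ carries only finitely many $(-1)$- and $(-2)$-curves. A preliminary observation organizes the geometry: fixing an effective anticanonical divisor $\Delta\in|-K_S|$, any integral curve $C$ that is not a component of $\Delta$ satisfies $C\cdot(-K_S)=C\cdot\Delta\ge 0$, so adjunction gives $C^2=2p_a(C)-2-C\cdot K_S\ge -2$. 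Hence every integral curve of negative self-intersection is either one of the finitely many components of $\Delta$, or, when $C^2\in\{-1,-2\}$, is forced to have $p_a(C)=0$, i.e.\ is a smooth rational $(-1)$- or $(-2)$-curve. Consequently condition (3) is equivalent to the finiteness of the set of all negative curves on $S$, and it is this reformulation that I would use throughout.

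For (2) $\Rightarrow$ (3) I would argue that the class of a negative curve is an indecomposable element of $M(S)$. If $C$ is integral with $C^2<0$, then $C$ is rigid: any effective divisor in $|C|$ meets $C$ negatively, hence contains $C$ as a component, and since the residual divisor is effective and numerically trivial this forces $h^0(S,\mathcal{O}_S(C))=1$. Therefore a relation $[C]=[A]+[B]$ with $A,B$ nonzero effective would yield $A+B=C$ as divisors, contradicting the irreducibility of $C$. Since every indecomposable element must occur in any generating set, if $M(S)$ is finitely generated it has only finitely many indecomposables, so $S$ has only finitely many negative curves, and in particular only finitely many $(-1)$- and $(-2)$-curves.

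The substance of the theorem is the converse (3) $\Rightarrow$ (2). Assuming finitely many negative curves $C_1,\dots,C_r$, I would first show that the Mori cone $\overline{\mathrm{NE}}(S)$ is rational polyhedral. Using the Zariski decomposition $D=P+N$ of a pseudo-effective class (valid over any algebraically closed field), with $N$ supported on the $C_i$ and $P$ nef, one obtains $\overline{\mathrm{NE}}(S)=\mathrm{Nef}(S)+\sum_i\mathbb{R}_{\ge 0}[C_i]$; since the $C_i$ are finite in number, the only extremal rays not spanned by some $C_i$ must be nef classes $D$ lying on the boundary of the positive cone, i.e.\ with $D^2=0$. The decisive point is that the anticanonical hypothesis confines and counts these square-zero nef rays: for such $D$ one has $D\cdot K_S\le 0$, the walls of $\mathrm{Nef}(S)$ abutting the hyperplane $K_S^{\perp}$ are cut out precisely by the $(-2)$-curves, while the $K_S$-negative boundary is governed by the $(-1)$-curves through the cone theorem. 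Hence finiteness of these curves bounds the number of square-zero extremal rays and $\overline{\mathrm{NE}}(S)$ is rational polyhedral. Finally I would promote this polyhedrality to finite generation of the monoid itself: by Gordan's lemma the lattice points of the dually polyhedral nef cone form a finitely generated monoid, and Riemann--Roch together with the section of $-K_S$ shows that a nef class $D$ with $D\cdot(-K_S)>0$ is effective, so that $M(S)$ is generated by the $[C_i]$, the components of $\Delta$, and a finite set of nef generators.

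I expect the main obstacle to lie entirely in the converse direction, and within it in two linked places: proving that the square-zero nef rays are finite in number, and upgrading the rational polyhedrality of the cone to genuine finite generation of $M(S)$. The first requires a careful analysis of the boundary of $\mathrm{Nef}(S)$ near $K_S^{\perp}$, where the negative semidefiniteness of the intersection form on $K_S^{\perp}$ interacts with the finitely many $(-2)$-curves; the second requires controlling the effectivity (and ideally semiampleness) of nef classes, including the delicate classes lying in $K_S^{\perp}$ for which Riemann--Roch yields no immediate section, so that a cone-theoretic generating set actually lifts to a monoid generating set for $M(S)$.
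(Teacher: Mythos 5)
Your overall architecture matches the paper's: (1)$\Leftrightarrow$(2) via Theorem \ref{criterion}, and (2)$\Leftrightarrow$(3) as the geometric content. The paper simply quotes Lemma \ref{fg} (that is, \cite[Corollary 4.2]{LH}) for the latter, whereas you attempt to reprove it. Your reduction of (3) to ``finitely many negative curves'' via adjunction, and your proof of (2)$\Rightarrow$(3) (negative curves are rigid, hence give indecomposable elements of $M(S)$, hence must appear in every generating set), are correct and complete.

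There are, however, two genuine gaps. First, Theorem \ref{criterion} does not say that $Cox(S)$ is finitely generated if and only if $M(S)$ is; it says $Cox(S)$ is finitely generated if and only if $M(S)$ is finitely generated \emph{and} $S$ is extremal. So to get (2)$\Rightarrow$(1) you must additionally verify that an anticanonical rational surface with finitely generated effective monoid is extremal, i.e.\ that every nef class has a multiple which is the class of a base point free effective divisor; this is a nontrivial input (resting on Harbourne-type results for anticanonical surfaces) that your write-up silently drops. Second, and more seriously, your proof of (3)$\Rightarrow$(2) is only a sketch at exactly the point you yourself flag. The assertion that finitely many negative curves force $\overline{\mathrm{NE}}(S)$ to be rational polyhedral is where all the work lies: when $K_S^2<0$ the restriction of the intersection form to $K_S^{\perp}$ has signature $(1,\rho-2)$, so the boundary of the positive cone meets $K_S^{\perp}$ in a positive-dimensional family of rays, and the cone theorem (which controls only the $K_S$-negative part) says nothing there; one must show that the finitely many $(-2)$-curves cut this region down to finitely many rational rays, and one must still produce sections for nef classes in $K_S^{\perp}$, where Riemann--Roch gives none. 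This is precisely the content of \cite[Corollary 4.2]{LH}, invoked in the paper as Lemma \ref{fg}; as written, your argument assumes rather than proves it.
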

\begin{proof}
It follows from Lemma \ref{fg} and Theorem \ref{criterion} below.
\end{proof}

As consequences, the following two results hold:
\begin{corollary}
The Cox ring of a smooth projective rational surface having a
canonical divisor of self-intersection larger than or equal to zero
is finitely generated if and only if the set of $(-1)$-curves is
finite.
\end{corollary}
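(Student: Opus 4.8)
The plan is to reduce everything to Theorem~\ref{main}, whose only standing hypothesis is that $-K_{S}$ be effective. So the first step is to check that $K_{S}^{2}\geq 0$ already forces $-K_{S}$ to possess a nonzero global section. Since $S$ is rational we have $\chi(\mathcal{O}_{S})=1$ and $h^{0}(2K_{S})=0$ (all plurigenera of a rational surface vanish, in any characteristic); hence by Serre duality $h^{2}(-K_{S})=h^{0}(2K_{S})=0$, and Riemann--Roch gives
\[
h^{0}(-K_{S})\ \geq\ \chi(-K_{S})\ =\ \chi(\mathcal{O}_{S})+\tfrac{1}{2}(-K_{S})(-K_{S}-K_{S})\ =\ 1+K_{S}^{2}\ \geq\ 1 .
\]
Thus $-K_{S}$ is effective and Theorem~\ref{main} applies: $Cox(S)$ is finitely generated if and only if $S$ carries only finitely many $(-1)$-curves and only finitely many $(-2)$-curves. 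The corollary will follow once I show that, under $K_{S}^{2}\geq 0$, the finiteness of the $(-2)$-curves is automatic, so that only the $(-1)$-curve condition survives.

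To prove that $S$ has finitely many $(-2)$-curves I would work inside the lattice $Pic(S)$, which by the Hodge index theorem has signature $(1,\rho-1)$. If $C$ is a $(-2)$-curve then adjunction gives $C\cdot K_{S}=-2-C^{2}=0$, so its class lies in $K_{S}^{\perp}$ and has square $-2$; moreover distinct $(-2)$-curves are numerically distinct, since two numerically equivalent irreducible curves $C\equiv C'$ would satisfy $C\cdot C'=C^{2}=-2<0$, which is impossible for two distinct irreducible curves. When $K_{S}^{2}>0$ the form on $K_{S}^{\perp}$ is negative definite, so $K_{S}^{\perp}$ contains only finitely many vectors of square $-2$ and we are done immediately.

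The delicate case is $K_{S}^{2}=0$, where $K_{S}^{\perp}$ is only negative \emph{semi}definite, its radical being (essentially) $\mathbb{Z}K_{S}$; here $K_{S}^{\perp}$ contains infinitely many classes of square $-2$, for instance $c,\,c+K_{S},\,c+2K_{S},\dots$. The point I would exploit is that no two classes in the same radical-coset can both be represented by irreducible curves: if $c'=c+w$ with $w$ in the radical (so $w\cdot x=0$ for every $x\in K_{S}^{\perp}$), then $c\cdot c'=c^{2}+c\cdot w=-2<0$, again impossible for two distinct irreducible curves. Hence the classes of the $(-2)$-curves inject into the quotient $K_{S}^{\perp}/\mathrm{rad}$, which is negative definite of finite rank and therefore has only finitely many vectors of square $-2$. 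This bounds the number of $(-2)$-curves.

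Combining the three steps, $-K_{S}$ is effective, the number of $(-2)$-curves is finite regardless of any further hypothesis, and so the criterion of Theorem~\ref{main} collapses to the single requirement that the set of $(-1)$-curves be finite, which is exactly the asserted equivalence. The main obstacle is precisely the semidefinite case $K_{S}^{2}=0$: there the naive lattice count fails because $K_{S}^{\perp}$ carries infinitely many $(-2)$-classes, and it is the geometric input that distinct irreducible curves meet non-negatively that rescues finiteness, by collapsing each radical-coset to at most one actual curve.
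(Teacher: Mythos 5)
Your proof is correct, and it follows the same skeleton as the paper's (reduce to Theorem~\ref{main}, then show that under $K_{S}^{2}\geq 0$ the $(-2)$-curve condition is automatic), but it differs in that it actually proves the two supporting facts that the paper handles by citation or leaves implicit. The paper's entire proof is ``Apply Theorem~\ref{main} and \cite[Proposition 4.3 (a), page 9]{LH}'': the finiteness of the $(-2)$-curves is outsourced to Lahyane--Harbourne, and the hypothesis of Theorem~\ref{main} that $-K_{S}$ be effective (not stated in the corollary) is not addressed. You supply both: the Riemann--Roch computation $h^{0}(-K_{S})\geq 1+K_{S}^{2}\geq 1$ (valid in any characteristic since $h^{2}(-K_{S})=h^{0}(2K_{S})=0$ for a rational surface) closes the gap about effectivity, and your lattice argument for the $(-2)$-curves is a correct self-contained replacement for the cited proposition. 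The treatment of the semidefinite case $K_{S}^{2}=0$ is the genuinely nontrivial part and you handle it properly: the classes of $(-2)$-curves inject into the negative definite quotient $K_{S}^{\perp}/\mathrm{rad}$ because two classes in the same radical coset would give two irreducible curves with intersection number $-2$. What your approach buys is transparency and independence from the reference; what the paper's buys is brevity, and the cited proposition is in fact slightly stronger (it is stated for anticanonical rational surfaces in general and is part of a finer analysis of the effective monoid there). Either way the corollary follows.
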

\begin{proof}
Apply Theorem \ref{main} and \cite[Proposition 4.3 (a), page 9]{LH}.
\end{proof}

\begin{corollary}
\label{larger than zero} The Cox ring of a smooth projective
rational surface having a canonical divisor of self-intersection
larger than zero is finitely generated.
\end{corollary}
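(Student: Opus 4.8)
The plan is to reduce the statement to the finiteness of the set of $(-1)$-curves and then to establish that finiteness by the Hodge Index Theorem. Since $K_S^2 > 0$ in particular gives $K_S^2 \geq 0$, the hypothesis of the preceding corollary is met, so it suffices to prove that $S$ carries only finitely many $(-1)$-curves. Before doing so I would first check that the standing hypothesis of Theorem \ref{main}, namely that $-K_S$ has a nonzero global section, is automatic here: by Riemann--Roch on the rational surface $S$ one has $\chi(-K_S) = 1 + K_S^2$, while Serre duality together with the vanishing of the plurigenera of a rational surface gives $h^2(-K_S) = h^0(2K_S) = 0$; hence $h^0(-K_S) \geq 1 + K_S^2 > 0$ and $-K_S$ is effective.

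Next I would set up the lattice-theoretic framework. Because $S$ is rational, numerical and linear equivalence coincide and $Pic(S)$ is a free abelian group on which the intersection form has signature $(1, \rho-1)$ by the Hodge Index Theorem. The hypothesis $K_S^2 > 0$ places $K_S$ in the positive part of this form, so its orthogonal complement $K_S^{\perp}$ is negative definite. This is the key structural fact I would exploit.

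The main step is then a counting argument. Any $(-1)$-curve $E$ satisfies $E^2 = -1$ and, by the adjunction formula, $E \cdot K_S = -1$; thus every such class lies in the affine hyperplane $\{\, D : D \cdot K_S = -1 \,\}$. Writing $E = \frac{-1}{K_S^2}K_S + E'$ with $E' \in K_S^{\perp} \otimes \mathbb{R}$, the condition $E^2 = -1$ forces $(E')^2 = -1 - \frac{1}{K_S^2}$, a fixed negative value. Since $K_S^{\perp}$ is negative definite, the set of real vectors of a prescribed negative self-intersection is a compact ellipsoid, so it meets the discrete lattice $Pic(S)$ in only finitely many points. Consequently there are at most finitely many $(-1)$-curve classes, and distinct $(-1)$-curves have distinct classes (two distinct irreducible curves $E_1 \neq E_2$ with $E_1 \equiv E_2$ would give $E_1 \cdot E_2 = E_1^2 = -1 < 0$, contradicting $E_1 \cdot E_2 \geq 0$). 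Combining this with the preceding corollary yields the finite generation of $Cox(S)$.

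I expect the only delicate point to be the rigorous justification that a fixed value of $D \cdot K_S$ together with a fixed value of $D^2$ confines the integral classes to a finite set; everything rests on the negative definiteness of $K_S^{\perp}$, which in turn rests squarely on $K_S^2 > 0$ and would fail for $K_S^2 = 0$, explaining why the previous, weaker corollary cannot be strengthened without an extra finiteness input. The same ellipsoid argument incidentally shows the finiteness of the $(-2)$-curves (those $C$ with $C^2 = -2$ and $C \cdot K_S = 0$), so one could alternatively bypass the preceding corollary and invoke Theorem \ref{main} directly.
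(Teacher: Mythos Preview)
Your argument is correct and follows the same logical route as the paper: both reduce the claim to Theorem~\ref{main} together with the finiteness of $(-1)$- and $(-2)$-curves. The paper's one-line proof simply invokes \cite[Proposition~4.3(a)]{LH} for that finiteness, whereas you supply the argument in full via the Hodge Index Theorem (the ellipsoid count in the negative-definite lattice $K_S^{\perp}$); this is essentially the content of the cited result, so the two proofs coincide in substance. One point worth keeping from your write-up is the explicit verification, via Riemann--Roch, that $K_S^2>0$ forces $-K_S$ to be effective---the paper leaves this implicit, yet it is precisely the hypothesis needed for Theorem~\ref{main} to apply.
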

\begin{proof}
Apply Theorem \ref{main} and \cite[Proposition 4.3 (a), page 9]{LH}.
\end{proof}

In particular, since a Del Pezzo surface is nothing but a blow up of
the projective plane at $r$  points with $r\leq 8$, we recover the
well known result, see \cite{BP}:
\begin{corollary}
The Cox ring of a Del Pezzo surface is finitely generated.
\end{corollary}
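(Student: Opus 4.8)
The plan is to reduce the statement to Corollary \ref{larger than zero}, so that the entire argument rests on identifying a Del Pezzo surface as a smooth projective rational surface whose canonical divisor has strictly positive self-intersection. First I would recall the classical structure theorem: every Del Pezzo surface $S$ over $k$ is either $\mathbb{P}^{2}$, the quadric $\mathbb{P}^{1}\times\mathbb{P}^{1}$, or the blow-up $\pi\colon S\to\mathbb{P}^{2}$ of the projective plane at $r$ points in general position with $1\leq r\leq 8$. In each case $S$ is a smooth projective rational surface, so the ambient hypotheses of the preceding results are met and I only need to control $K_{S}^{2}$.

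Second, I would compute the self-intersection of the canonical class. For the blow-up of $\mathbb{P}^{2}$ at $r$ distinct points one has $K_{S}=\pi^{*}K_{\mathbb{P}^{2}}+\sum_{i=1}^{r}E_{i}$, where the $E_{i}$ are the exceptional divisors, whence $K_{S}^{2}=K_{\mathbb{P}^{2}}^{2}-r=9-r$. Since a Del Pezzo surface arises only for $r\leq 8$, this gives $K_{S}^{2}=9-r\geq 1>0$; for the two exceptional models $\mathbb{P}^{2}$ and $\mathbb{P}^{1}\times\mathbb{P}^{1}$ one checks directly that $K_{S}^{2}=9$ and $K_{S}^{2}=8$ respectively, again strictly positive.

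Third, with $K_{S}^{2}>0$ established, I would invoke Corollary \ref{larger than zero}, which asserts precisely that the Cox ring of a smooth projective rational surface whose canonical divisor has self-intersection larger than zero is finitely generated. This immediately yields that $Cox(S)$ is finitely generated and closes the argument.

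The step I expect to be the only real point requiring care is the second one: the whole corollary hinges on the numerical fact $K_{S}^{2}=9-r>0$, that is, on the bound $r\leq 8$ which defines the Del Pezzo range. Everything else is formal, so there is no genuine obstacle beyond correctly recalling the classification and the blow-up formula for the canonical class; I would also note in passing that the ampleness of $-K_{S}$ in particular guarantees a nonzero global section, so that the hypothesis of Theorem \ref{main} is satisfied and the equivalences there (in particular the finiteness of the sets of $(-1)$- and $(-2)$-curves) apply in the background.
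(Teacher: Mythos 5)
Your proposal is correct and follows essentially the same route the paper intends: identify a Del Pezzo surface as a (rational) surface with $K_{S}^{2}=9-r\geq 1>0$ and invoke Corollary \ref{larger than zero}. Your treatment is in fact slightly more careful than the paper's, since you also handle $\mathbb{P}^{1}\times\mathbb{P}^{1}$, which is not literally a blow-up of $\mathbb{P}^{2}$ as the paper's phrasing suggests.
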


\begin{corollary}
The Cox ring of a smooth projective rational surface having an
integral curve algebraically equivalent to an anti-canonical divisor
is finitely generated if and only if the set of $(-2)$-curves is
finite and spans a linear subspace in the Picard group of
codimension one.
\end{corollary}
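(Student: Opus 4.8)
The plan is to derive the statement from Theorem \ref{main}, using the given integral anticanonical curve as a probe to convert the finiteness of $(-1)$-curves into the spanning condition on $(-2)$-curves. Since the hypothesis provides an integral curve $C$ with $C\equiv -K_S$, the sheaf of $-K_S$ has a nonzero global section, so Theorem \ref{main} applies and $Cox(S)$ is finitely generated if and only if $S$ has only finitely many $(-1)$-curves and only finitely many $(-2)$-curves. Thus the whole content is the equivalence: \emph{under the standing assumption that the set of $(-2)$-curves is finite, the surface has only finitely many $(-1)$-curves if and only if the $(-2)$-curves span a subspace of $Pic(S)\otimes\mathbb{R}$ of codimension one.} If there are infinitely many $(-2)$-curves then $Cox(S)$ is not finitely generated and the spanning condition fails as well, so that case is immediate.

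Next I would record the geometry forced by $C$. Adjunction gives $2p_a(C)-2=C\cdot(C+K_S)=(-K_S)\cdot 0=0$, so $C$ is an integral curve of arithmetic genus one; in particular $-K_S\not\equiv 0$, so $K_S^{\perp}$ is a hyperplane in $Pic(S)\otimes\mathbb{R}$. For a $(-1)$-curve $E$ and a $(-2)$-curve $F$ adjunction yields $E\cdot K_S=-1$ and $F\cdot K_S=0$, hence $E\cdot C=1$ and $F\cdot C=0$. As $F$ and the irreducible curve $C$ share no component, $F\cdot C=0$ forces $F\cap C=\emptyset$, so every $(-2)$-curve restricts trivially to $C$; consequently the span of the $(-2)$-curves lies in $K_S^{\perp}$, and ``spanning a codimension-one subspace'' is the same as ``spanning all of $K_S^{\perp}$''. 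I would then introduce the restriction homomorphism $r\colon Pic(S)\to Pic(C)$. Because $p_a(C)=1$, the group $Pic^{0}(C)$ is a one-dimensional connected commutative algebraic group (an elliptic curve, $\mathbb{G}_m$, or $\mathbb{G}_a$ according as $C$ is smooth, nodal, or cuspidal), and $r$ carries $K_S^{\perp}$ into $Pic^{0}(C)$ and the $(-1)$-curves into $Pic^{1}(C)$.

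The easy direction is ``span $\Rightarrow$ finite''. If the $(-2)$-curves span $K_S^{\perp}$, then since each restricts trivially to $C$ the image $r(K_S^{\perp})$ has rank zero, i.e. is finite. Fixing one $(-1)$-curve $E_0$, the assignment $E\mapsto r(E)-r(E_0)$ then sends all $(-1)$-curves into this finite set, and I would argue it is finite-to-one: a $(-1)$-class is pinned down, up to finitely many choices, by its numerical invariants $E^2=-1$, $E\cdot K_S=-1$ together with its restriction to $C$, using that distinct $(-1)$-curves in one class cannot coexist. This yields only finitely many $(-1)$-curves and hence finite generation of $Cox(S)$.

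The hard part will be the converse, ``finite $\Rightarrow$ span'', which I would prove in contrapositive form: if the $(-2)$-curves do \emph{not} span $K_S^{\perp}$, I must produce infinitely many \emph{irreducible} $(-1)$-curves. The natural mechanism is a Mordell--Weil/Shioda--Tate type computation: establishing the rank identity $\operatorname{rank} r(K_S^{\perp})=\operatorname{codim}_{K_S^{\perp}}\bigl(\text{span of }(-2)\text{-curves}\bigr)$, so that failure of the spanning condition makes $r(K_S^{\perp})$ infinite, and then translating a positive-rank subgroup of $Pic^{0}(C)$ back into an infinite supply of effective, irreducible $(-1)$-curves on $S$ (via the finite reflection group generated by the finitely many $(-2)$-curves together with ``translation'' by the group $r(K_S^{\perp})$). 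The genuine obstacle is twofold: first, showing that the kernel of $r|_{K_S^{\perp}}$ is, up to finite index, exactly the span of the $(-2)$-curves — the inclusion $\supseteq$ is the disjointness observed above, but the reverse requires a negativity argument placing any $K_S$-orthogonal class that is trivial on $C$ onto a configuration of $(-2)$-curves disjoint from $C$, and this is precisely where finiteness of the $(-2)$-curves is used; and second, upgrading classes to honest irreducible curves. I would also need to treat with care the degenerate features of $K_S^{\perp}$ when $K_S^{2}=0$ (where $K_S$ lies in the radical, so norm-$(-1)$ classes are only controlled modulo the line $\langle K_S\rangle$) and the behaviour of $Pic^{0}(C)$ in positive characteristic when $C$ is cuspidal, since $\mathbb{G}_a$ carries no nontrivial torsion; in each case the one-dimensionality of $Pic^{0}(C)$ and the genus-one adjunction computed above are the levers that make the bookkeeping close.
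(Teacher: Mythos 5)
Your reduction is the same as the paper's first step: the integral curve $C\equiv -K_S$ gives $h^0(S,O_S(-K_S))>0$, so Theorem \ref{main} applies and everything comes down to showing that, when the $(-2)$-curves are finite in number, finiteness of the set of $(-1)$-curves is equivalent to the $(-2)$-curves spanning the hyperplane $K_S^{\perp}$. The paper disposes of this remaining equivalence in one line by citing Harbourne \cite{Harb1}; you instead try to reprove it, and your outline (restriction to the arithmetic-genus-one curve $C$, one-dimensionality of $Pic^{0}(C)$, the $(-2)$-curves landing in $\ker r$) is indeed the mechanism behind the cited result. But as a proof it has genuine gaps in both directions.

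In the direction ``span $\Rightarrow$ finitely many $(-1)$-curves'', the finite-to-one claim is not justified by ``distinct $(-1)$-curves in one class cannot coexist'': the fibre of $E\mapsto r(E)$ is far larger than a linear equivalence class. Under your spanning hypothesis $\ker(r|_{K_S^{\perp}})$ has finite index in $K_S^{\perp}$, hence rank $\rho-1$, and the set of \emph{classes} $x$ with $x^{2}=x\cdot K_S=-1$ and prescribed restriction to $C$ is in general infinite. What is actually needed is that two distinct $(-1)$-\emph{curves} $E,E'$ cannot satisfy $O_C(E)\cong O_C(E')$: one must run $D=E-E'$ through the restriction sequence $0\to O_S(D+K_S)\to O_S(D)\to O_C(D)\to 0$, use $O_C(D)\cong O_C$ together with a vanishing or negativity argument to force $D$ or $-D$ to be effective, and then contradict $E\cdot E'\ge 0$. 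That computation is the substance of the step and is absent. In the converse direction you explicitly label the two essential steps --- identifying $\ker(r|_{K_S^{\perp}})$ with the span of the $(-2)$-curves up to finite index, and converting a positive-rank image in $Pic^{0}(C)$ into infinitely many irreducible $(-1)$-curves --- as ``obstacles'' rather than proving them; these are precisely the content of the result the paper imports wholesale from \cite{Harb1}. So the proposal points at the right theorem and the right mechanism, but it does not constitute a proof of the equivalence that the corollary rests on.
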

\begin{proof}
The result follows from \cite{Harb1} and  Theorem \ref{main}.
\end{proof}


Here is our second result:

\begin{theorem}
\label{main2} Let $Z$ be a smooth projective rational surface
defined over an algebraically closed field $k$ of arbitrary
characteristic such that the invertible sheaf associated to the
divisor $-rK_{Z}$ has only two linearly independent global sections
for some positive integer $r$.

The following assertions are equivalent:
\begin{enumerate}
\item $Cox(Z)$ is finitely generated.
\item  The set of smooth projective rational curves of self-intersection $-1$ on
$Z$ is finite.
\end{enumerate}
Here $K_{Z}$ denotes a canonical divisor on $Z$.
\end{theorem}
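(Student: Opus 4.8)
The strategy is to reduce the statement to Theorem \ref{main}. The two theorems have the same conclusion, except that Theorem \ref{main} requires in addition the finiteness of the set of $(-2)$-curves; so the whole point is to show that the hypothesis $h^{0}(Z,-rK_{Z})=2$ forces two things: that $-K_{Z}$ is itself effective, so that Theorem \ref{main} applies, and that the number of $(-2)$-curves is automatically finite. Granting these, the equivalence is immediate. Indeed, if $Cox(Z)$ is finitely generated then Theorem \ref{main} already gives finitely many $(-1)$-curves, which is assertion (2); conversely, finitely many $(-1)$-curves together with the automatic finiteness of the $(-2)$-curves feed into Theorem \ref{main} and yield that $Cox(Z)$ is finitely generated. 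Thus I concentrate on the two structural facts, and for these I would first pin down $K_{Z}^{2}$.

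Since $Z$ is rational one has $\chi(\mathcal{O}_{Z})=1$ and $h^{0}(Z,mK_{Z})=0$ for all $m\ge 1$, so $h^{2}(Z,-mK_{Z})=0$ by Serre duality and Riemann--Roch gives
$$\chi(\mathcal{O}_{Z}(-mK_{Z}))=1+\tfrac{1}{2}\,m(m+1)\,K_{Z}^{2}.$$
Taking $m=r$ and using $h^{0}(Z,-rK_{Z})=2$ forces $r(r+1)K_{Z}^{2}\le 2$, whence $K_{Z}^{2}\le 1$, with $K_{Z}^{2}=1$ possible only for $r=1$. When $K_{Z}^{2}=1$ the surface is settled at once by Corollary \ref{larger than zero}: there $Cox(Z)$ is finitely generated and the set of $(-1)$-curves is finite, so both (1) and (2) hold. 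The substantive case is therefore $K_{Z}^{2}\le 0$, where the key claim is that in fact $K_{Z}^{2}=0$. To see this I would suppose $K_{Z}^{2}<0$ and examine the moving part $M$ of the pencil $|-rK_{Z}|$: resolving its base points produces a (quasi-)elliptic fibration that realizes $Z$ as a nontrivial blow-up of a relatively minimal rational elliptic surface $Z_{0}$ with $K_{Z_{0}}^{2}=0$. Computing $h^{0}(Z,-rK_{Z})$ as the space of sections of $-rK_{Z_{0}}$ vanishing to the order prescribed by the exceptional curves then gives $h^{0}\le 1$, contradicting the hypothesis; hence $K_{Z}^{2}=0$.

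Granting $K_{Z}^{2}=0$, everything falls into place. Then $-K_{Z}$ is nef with $(-K_{Z})^{2}=0$, so $h^{0}(Z,-K_{Z})\ge\chi(\mathcal{O}_{Z}(-K_{Z}))=1$ and $|-rK_{Z}|$ is base-point free (a self-intersection $0$ leaves no room for base points), defining a relatively minimal (quasi-)elliptic fibration $f\colon Z\to\mathbb{P}^{1}$ whose fibre class is numerically proportional to $-K_{Z}$. In particular $-K_{Z}$ is effective, as a fibre or, in small characteristic, as the reduced multiple fibre furnished by the canonical bundle formula. For the $(-2)$-curves, let $C$ be one; adjunction gives $C\cdot K_{Z}=0$, hence $C\cdot F=0$ for a fibre $F$, so $C$ lies in a fibre of $f$. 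The reducible or non-reduced fibres are parametrised by a proper closed subset of $\mathbb{P}^{1}$, hence by a finite set, and each has finitely many components; therefore $Z$ has only finitely many $(-2)$-curves. With $-K_{Z}$ effective and the $(-2)$-curves finite, Theorem \ref{main} delivers the equivalence.

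The main obstacle is the dichotomy of the second paragraph: proving that $h^{0}(Z,-rK_{Z})=2$ forces $K_{Z}^{2}=0$, equivalently that the moving part of $|-rK_{Z}|$ defines a relatively minimal elliptic fibration. This is where the hypothesis genuinely bites, and one must rule out both a big anticanonical class with $K_{Z}^{2}<0$ and the presence of vertical $(-1)$-curves; once $K_{Z}^{2}=0$ is in hand, the effectivity of $-K_{Z}$ and the finiteness of the $(-2)$-curves are routine consequences of the fibration structure. A secondary but real difficulty is to carry these last statements through in arbitrary characteristic, where quasi-elliptic fibrations and wild or multiple fibres occur---this is precisely the setting in which the exponent $r>1$ appears and in which the canonical bundle formula must be applied with care.
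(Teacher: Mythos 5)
Your reduction to Theorem \ref{main} is reasonable in outline, but the pivotal step --- that $h^{0}(Z,-rK_{Z})=2$ forces $K_{Z}^{2}\geq 0$ --- is false, and the counterexample is elementary. Let $X$ be a relatively minimal rational elliptic surface (so $-K_{X}=F$ is the fibre class and $K_{X}^{2}=0$) with a nodal fibre $C$, and let $\pi\colon Z\to X$ be the blow-up of the node $p$ of $C$. Then $K_{Z}^{2}=-1$, yet $h^{0}(Z,-2K_{Z})=2$: indeed $H^{0}(Z,-2K_{Z})$ is the space of sections of $2F$ vanishing to order at least $2$ at $p$, and $H^{0}(X,2F)=\langle s^{2},st,t^{2}\rangle$ where $s$ cuts out $C$ (so vanishes to order exactly $2$ at the node) and $t$ cuts out a fibre missing $p$; the sections vanishing to order $\geq 2$ at $p$ are precisely $\langle s^{2},st\rangle$. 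Thus the hypothesis of Theorem \ref{main2} holds with $r=2$ while $K_{Z}^{2}<0$. Your argument that the exceptional curves impose enough conditions to force $h^{0}\leq 1$ fails exactly because the centre of a blow-up may be a singular point of a fibre, where fewer conditions are imposed. Since your entire treatment of the ``substantive case'' (nefness of $-K_{Z}$, base-point-freeness of $|-rK_{Z}|$, the relatively minimal fibration, the location of the $(-2)$-curves) is predicated on first securing $K_{Z}^{2}=0$, the proof collapses on a nonempty class of surfaces covered by the theorem. A secondary unjustified step: even granting $K_{Z}^{2}=0$, the assertion that $-K_{Z}$ is nef does not follow from $K_{Z}^{2}=0$ and rationality alone (blow up $\mathbb{P}^{2}$ at nine points, four of them collinear: the strict transform of the line meets $-K$ negatively); nefness would have to be extracted from the hypothesis on $h^{0}$, which you do not do.

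What does survive, and what the paper actually relies on, is the finiteness of the set of $(-2)$-curves, and this can be had without any case division on $K_{Z}^{2}$: a $(-2)$-curve $C$ satisfies $C.K_{Z}=0$ by adjunction, hence $C.D=0$ for every member $D$ of the pencil $|-rK_{Z}|$, so $C$ is a component either of the fixed part of the pencil or of a member of the moving part (a ``fibre'' of the induced rational map to $\mathbb{P}^{1}$); only finitely many such components exist. The paper then simply combines this finiteness with Theorem \ref{criterion} (in effect with Theorem \ref{main} via Lemma \ref{fg}). To repair your write-up, replace the $K_{Z}^{2}$ dichotomy by this direct argument, and address separately the effectivity of $-K_{Z}$ that is needed to invoke Lemma \ref{fg}.
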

\begin{proof}
Apply Theorem \ref{criterion} below  and the fact that the set of
$(-2)$-curves  on $Z$ is finite.
\end{proof}

\section{Preliminaries}
\label{preliminaries}
\subsection{General Notions}
Let $S$ be a smooth projective surface defined over an algebraically
closed field of arbitrary characteristic.  A canonical divisor on
$S$, respectively  the Picard group  of $S$ will be denoted by
$K_{S}$ and $Pic(S)$ respectively. There is an intersection form on
$Pic(S)$ induced by the intersection of divisors on $S$, it will be
denoted by a dot, that is, for $x$ and $y$ in $Pic(S)$, $x.y$ is the
intersection number of $x$ and $y$ (see \cite{HA} and \cite{BPV}).

The following result known as the Riemann-Roch theorem for smooth
projective surfaces  is stated using the Serre duality.

\begin{lemma}
\label{RR} Let $D$ be a divisor on a smooth projective  surface $S$
having an algebraically closed field of arbitrary characteristic as
a ground field. Then the following equality holds:
$$h^{0}(S, O_{S}(D))- h^{1}(S, O_{S}(D)) +
h^{0}(S, O_{S}(K_{S}-D))= 1+ p_{a}(S)+ \frac{1}{2}(D^{2}-D.K_{S}).$$
$O_{S}(D)$ (respectively, $p_{a}(S)$) being an invertible sheaf
associated canonically to the divisor $D$ (respectively, the
arithmetic genus of $S$, that is $\chi (O_{S})-1$, where $\chi$ is
the Euler characteristic function).
\end{lemma}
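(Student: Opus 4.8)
The plan is to deduce the stated identity from the Hirzebruch--Riemann--Roch formula in its cohomological form
$$\chi(O_{S}(D)) = \chi(O_{S}) + \tfrac{1}{2}(D^{2}-D.K_{S}),$$
where $\chi(O_{S}(D)) = h^{0}(S,O_{S}(D)) - h^{1}(S,O_{S}(D)) + h^{2}(S,O_{S}(D))$, and then to rewrite the two outer terms. Serre duality on the smooth projective surface $S$ gives $h^{2}(S,O_{S}(D)) = h^{0}(S,O_{S}(K_{S}-D))$, while $p_{a}(S) = \chi(O_{S}) - 1$ by definition, so that $\chi(O_{S}) = 1 + p_{a}(S)$. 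Substituting both into the cohomological formula yields exactly the asserted equality, so the whole content of the lemma reduces to proving the Euler-characteristic identity displayed above.

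To prove that identity I would argue that the function
$$\phi(D) := \chi(O_{S}(D)) - \chi(O_{S}) - \tfrac{1}{2}(D^{2}-D.K_{S})$$
vanishes for every divisor $D$, by first showing that $\phi$ is unchanged when an effective divisor is added or removed. For an effective divisor $C$ and any $D$, the restriction sequence
$$0 \longrightarrow O_{S}(D-C) \longrightarrow O_{S}(D) \longrightarrow O_{C}(D) \longrightarrow 0$$
is exact, and additivity of the Euler characteristic gives $\chi(O_{S}(D)) - \chi(O_{S}(D-C)) = \chi(O_{C}(D))$. Computing the right-hand side by the Riemann--Roch theorem on the (possibly singular) curve $C$ together with the adjunction formula $p_{a}(C) = 1 + \tfrac{1}{2}C.(C+K_{S})$ gives $\chi(O_{C}(D)) = D.C - \tfrac{1}{2}C.(C+K_{S})$. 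A direct expansion shows this equals $\tfrac{1}{2}(D^{2}-D.K_{S}) - \tfrac{1}{2}((D-C)^{2}-(D-C).K_{S})$, whence $\phi(D) = \phi(D-C)$; relabelling also yields $\phi(D) = \phi(D+C)$.

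With this invariance in hand the reduction is immediate. Choosing a very ample divisor $H$ and applying Serre's theorem, for $n \gg 0$ both $D+nH$ and $nH$ have nonzero sections, so we may pick effective divisors $A \in |D+nH|$ and $B \in |nH|$ with $D \sim A-B$. Since $\phi$ depends only on the class of $D$ and is invariant under adding or subtracting effective classes, $\phi(D) = \phi(A-B) = \phi(A) = \phi(A-A) = \phi(0)$, and $\phi(0) = \chi(O_{S}) - \chi(O_{S}) = 0$. This establishes the Euler-characteristic identity and hence, after the Serre-duality rewriting of the first paragraph, the lemma.

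The main obstacle is the bilinear reduction step, namely establishing $\phi(D)=\phi(D-C)$: it rests on Riemann--Roch for curves and on adjunction, and one must contend with the fact that a general member of $|D+nH|$ need not be smooth or irreducible in positive characteristic. This is precisely why I would carry out the curve-level computation through the arithmetic genus and $\chi(O_{C})$ of an arbitrary effective divisor $C$, rather than invoking Bertini to produce a smooth $C$, thereby keeping the argument valid in every characteristic as the hypothesis of the lemma demands.
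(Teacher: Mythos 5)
The paper does not prove this lemma at all: it is quoted as the standard Riemann--Roch theorem for surfaces, already rewritten via Serre duality ($h^{2}(S,O_{S}(D))=h^{0}(S,O_{S}(K_{S}-D))$) and via $p_{a}(S)=\chi(O_{S})-1$, with \cite{HA} and \cite{BPV} as the background references. So there is no argument in the paper to compare yours against; what you have written is essentially the classical textbook proof, and its skeleton is correct: the reduction to $\chi(O_{S}(D))=\chi(O_{S})+\tfrac{1}{2}(D^{2}-D.K_{S})$, the invariance $\phi(D)=\phi(D\pm C)$ obtained from the restriction sequence, and the final step writing $D\sim A-B$ with $A\in|D+nH|$, $B\in|nH|$ effective are all sound, and the algebraic identity $D.C-\tfrac{1}{2}C.(C+K_{S})=\tfrac{1}{2}(D^{2}-D.K_{S})-\tfrac{1}{2}((D-C)^{2}-(D-C).K_{S})$ checks out.

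The one step you cannot take for free is $\chi(O_{C}(D))=D.C-\tfrac{1}{2}C.(C+K_{S})$ for an \emph{arbitrary} effective divisor $C$. You derive it from Riemann--Roch on $C$ plus the adjunction formula $p_{a}(C)=1+\tfrac{1}{2}C.(C+K_{S})$, but for reducible, singular or non-reduced $C$ that generalized adjunction formula is itself most commonly deduced from Riemann--Roch on the surface (apply the surface formula to $-C$ and use $0\to O_{S}(-C)\to O_{S}\to O_{C}\to 0$), so as stated your argument risks circularity. Two repairs are available: either prove $\chi(O_{C})=-\tfrac{1}{2}C.(C+K_{S})$ independently, using that $C$ is a Gorenstein curve with dualizing sheaf $(\omega_{S}\otimes O_{S}(C))|_{C}$ together with the fact that $\deg(O_{S}(D)|_{C})=D.C$; or note that your stated reason for avoiding Bertini is unfounded here, since for $n\gg0$ both $|nH|$ and $|D+nH|$ are very ample, and a general hyperplane section of a smooth projective surface is smooth and irreducible over an algebraically closed field of \emph{any} characteristic, so the Hartshorne reduction to $D\sim C-E$ with $C,E$ nonsingular irreducible curves works verbatim. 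Either way the fix is routine; this is a gap in rigor, not a wrong approach.
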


Here we recall some standard results, see \cite{Harb3}, \cite{HA}
and \cite{BPV}. A divisor class  modulo linear equivalence  $x$ of a
smooth projective surface $S$  is effective, respectively
numerically effective (nef in short)  if an element of $x$ is an
effective, respectively numerically effective, divisor on $S$. Here
a divisor $D$ on $S$ is nef if  $D.C\geq 0$ for every integral curve
$C$ on $S$.  Now, we start with some properties which follow from a
successive iterations of blowing up closed points of a smooth
projective rational surface.

\begin{lemma} Let $ \pi ^{\star}: NS(X) \rightarrow NS(Y)$ be
the natural group homomorphism on N\'eron-Severi  groups induced by
a given birational morphism $ \pi : Y \rightarrow X$ of smooth
projective rational surfaces. Then $\pi ^{\star}$ is an injective
intersection-form preserving map of free abelian groups of finite
rank. Furthermore, it  preserves the dimensions of cohomology
groups, the effective divisor classes and the numerically effective
divisor classes.
\end{lemma}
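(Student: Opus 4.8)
The plan is to reduce to the case of a single blow-up and then check each assertion using the standard functorial properties of the pullback. By the structure theorem for birational morphisms of smooth projective surfaces, $\pi$ factors as a finite composition $\pi = \pi_{1} \circ \cdots \circ \pi_{n}$ in which each $\pi_{i}$ is the monoidal transformation (blow-up at a single closed point) of a smooth projective rational surface. Since $(\pi_{1} \circ \cdots \circ \pi_{n})^{\star} = \pi_{n}^{\star} \circ \cdots \circ \pi_{1}^{\star}$, and since injectivity, preservation of the intersection form, preservation of the dimensions of cohomology groups, and preservation of the effective and of the nef classes are each stable under composition, it suffices to prove the lemma when $\pi : Y \rightarrow X$ is a single blow-up with exceptional curve $E$.

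For such a $\pi$ I would first recall the orthogonal decomposition $NS(Y) = \pi^{\star} NS(X) \oplus \mathbb{Z}[E]$, in which $E^{2} = -1$ and $\pi^{\star} D . E = 0$ for every divisor class $D$. Because $NS(X)$ is a free abelian group of finite rank for a rational surface, this decomposition exhibits $\pi^{\star}$ as an injection of free abelian groups of finite rank (raising the rank by one), giving the first part of the statement. The identity $\pi^{\star} D . \pi^{\star} D' = D . D'$ is the projection formula combined with $\deg \pi = 1$, so $\pi^{\star}$ is intersection-form preserving.

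The preservation of the dimensions of the cohomology groups is the heart of the matter, and I expect it to be the main obstacle. I would use $\pi_{\star} \mathcal{O}_{Y} = \mathcal{O}_{X}$ together with the higher direct image vanishing $R^{j} \pi_{\star} \mathcal{O}_{Y} = 0$ for $j \geq 1$; the point requiring care is that this must hold in arbitrary characteristic, which I would establish via the theorem on formal functions (or a direct computation on the exceptional $\mathbb{P}^{1}$), since the characteristic-zero arguments relying on vanishing theorems are unavailable. Feeding this into the projection formula $R^{j} \pi_{\star}(\pi^{\star} \mathcal{O}_{X}(D)) \cong \mathcal{O}_{X}(D) \otimes R^{j} \pi_{\star} \mathcal{O}_{Y}$ and the Leray spectral sequence yields isomorphisms $H^{i}(Y, \pi^{\star} \mathcal{O}_{X}(D)) \cong H^{i}(X, \mathcal{O}_{X}(D))$ for all $i$, hence equality of the $h^{i}$. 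Preservation of effective classes is then immediate, since $\pi^{\star} D$ is effective if and only if $h^{0}(Y, \pi^{\star}\mathcal{O}_{X}(D)) > 0$, which equals $h^{0}(X, \mathcal{O}_{X}(D))$, i.e. if and only if $D$ is effective.

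Finally, for the nef classes the projection formula again suffices, in both directions. If $D$ is nef and $C \subset Y$ is an integral curve, then either $C = E$, so that $\pi^{\star} D . E = 0$, or $C$ is not contracted, so that $\pi^{\star} D . C = D . \pi_{\star} C \geq 0$; hence $\pi^{\star} D$ is nef. Conversely, if $\pi^{\star} D$ is nef and $C' \subset X$ is an integral curve, then $\pi^{\star} C'$ is effective and $D . C' = \pi^{\star} D . \pi^{\star} C' \geq 0$, so $D$ is nef. This completes the plan, the only genuinely delicate input being the characteristic-free direct image vanishing used in the cohomological step.
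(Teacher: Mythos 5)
Your argument is correct, but it is worth noting that the paper itself does not prove this lemma at all: it simply cites \cite[Lemma II.1, page 1193]{Harb5} (Harbourne, \emph{Anticanonical rational surfaces}). What you have written is essentially the standard proof underlying that reference, carried out in full: factor $\pi$ into point blow-ups, use the orthogonal decomposition $NS(Y)=\pi^{\star}NS(X)\oplus\mathbb{Z}[E]$ for injectivity and compatibility with the intersection form, and get the cohomological statement from $\pi_{\star}\mathcal{O}_{Y}=\mathcal{O}_{X}$, $R^{j}\pi_{\star}\mathcal{O}_{Y}=0$ for $j\geq 1$ (correctly flagged as needing the theorem on formal functions rather than a characteristic-zero vanishing theorem), the projection formula, and Leray. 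The reductions to a single blow-up and the two directions of the nef statement are all sound; in fact you prove slightly more than is asserted, since you establish that effectivity and nefness are preserved in both directions, which is what Harbourne's lemma actually records and what makes it useful. The only stylistic remark is that for the effective classes one can argue even more elementarily (the total transform of an effective divisor is effective, and conversely $\pi_{\star}$ of an effective divisor in the class $\pi^{\star}D$ is effective in the class $D$) without invoking the cohomology comparison, but your route through $h^{0}$ is equally valid given that you have already established the isomorphisms $H^{i}(Y,\pi^{\star}\mathcal{O}_{X}(D))\cong H^{i}(X,\mathcal{O}_{X}(D))$. In short: your proof is complete and self-contained where the paper is content to cite.
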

\begin{proof}
See \cite[Lemma II.1, page 1193]{Harb5}.
\end{proof}

\begin{lemma}
\label{effcneffne}
 Let $x$ be an element of the N\'eron-Severi group $NS(X)$
of a smooth projective rational surface $X$. The effectiveness or
the the nefness  of $x$ implies the noneffectiveness of $k_{X}-x$,
where $k_{X}$ denotes the element of $Pic(X)$ which contains a
canonical divisor on $X$. Moreover, the nefness of $x$ implies also
that the self-intersection of $x$ is greater than or equal to zero.
\end{lemma}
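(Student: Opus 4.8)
The plan is to establish the two assertions separately: the non-negativity of the self-intersection of a nef class, and the non-effectiveness of $k_X - x$. Throughout I will use that $X$, being rational, has geometric genus $p_g(X) = h^0(X,\mathcal O_X(K_X)) = 0$, so that $k_X$ itself is not effective.

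For the statement about self-intersection I would fix an ample class $H$ and exploit that, for every positive integer $n$, the class $nx + H$ is again ample, since the sum of a nef class and an ample one is ample. An ample class on a surface has strictly positive self-intersection, so $(nx+H)^2 = n^2 x^2 + 2n\,(x\cdot H) + H^2 > 0$; dividing by $n^2$ and letting $n \to \infty$ forces $x^2 \geq 0$. This is purely numerical and hence insensitive to the characteristic.

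The non-effectiveness of $k_X - x$ I would deduce from a single geometric input: a class $N \in NS(X)$ that is simultaneously nef and effective with $k_X \cdot N < 0$. Granting such an $N$, suppose $k_X - x$ were effective. If $x$ is effective then $x\cdot N \geq 0$ because $N$ is nef; if $x$ is nef then $x \cdot N \geq 0$ because $N$ is effective; so $x \cdot N \geq 0$ under either hypothesis. Likewise $(k_X - x)\cdot N \geq 0$, as $N$ is nef and $k_X - x$ is effective. Adding, $k_X \cdot N = x\cdot N + (k_X - x)\cdot N \geq 0$, contradicting $k_X \cdot N < 0$; hence $k_X - x$ is not effective, uniformly in both cases. (In the purely effective case one may instead simply note that if $x$ and $k_X - x$ were both effective, their sum would display $k_X$ as effective, contradicting $p_g(X)=0$.)

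The crux, and the step I expect to be the main obstacle, is producing $N$ while remaining characteristic-free. For this I would invoke the classification of rational surfaces: $X$ admits a birational morphism $\sigma : X \to X_0$ onto a minimal rational surface $X_0$, which is either $\mathbb P^2$ or a Hirzebruch surface. On $X_0$ take $N_0$ to be the class of a line (on $\mathbb P^2$) or of a fibre (on a Hirzebruch surface); in both cases $N_0$ is nef, effective, and satisfies $k_{X_0}\cdot N_0 < 0$ (namely $-3$, respectively $-2$). Setting $N = \sigma^\star N_0$, the preceding lemma on birational morphisms guarantees that $N$ is again nef and effective and that $\sigma^\star$ preserves the intersection form. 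Since $k_X = \sigma^\star k_{X_0} + \sum_i E_i$ with the $E_i$ the exceptional classes, and each $E_i \cdot \sigma^\star N_0 = 0$, we obtain $k_X\cdot N = \sigma^\star k_{X_0}\cdot \sigma^\star N_0 = k_{X_0}\cdot N_0 < 0$, exactly the class required above.
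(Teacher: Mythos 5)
Your argument is correct, but note that the paper does not actually prove this lemma: its ``proof'' is a pointer to Harbourne \cite[Lemma~II.2]{Harb5}, so there is no in-paper argument to match yours against. What you supply is a legitimate self-contained proof, and its two halves are worth commenting on separately. The reduction of everything to a single class $N$ that is simultaneously nef and effective with $k_X\cdot N<0$, produced by pulling back a line or a fibre from a minimal model, is clean, characteristic-free, and handles the effective and nef hypotheses uniformly; it also correctly leans on the paper's preceding lemma (that $\sigma^{\star}$ preserves nefness, effectivity and the intersection form), and the computation $k_X\cdot\sigma^{\star}N_0=k_{X_0}\cdot N_0$ via the projection formula is right. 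The one point of logical hygiene worth flagging is in the self-intersection statement: you derive $x^2\ge 0$ from the fact that nef plus ample is ample, but on a surface the standard proof of that fact (via Nakai--Moishezon, or via Kleiman's description of the ample cone as the interior of the nef cone) already requires knowing that nef classes have non-negative self-intersection, so as written the deduction is mildly circular. The usual non-circular route is the largest-root argument: if $x^2<0$, let $t_0>0$ be the largest root of $t\mapsto (x+tH)^2$, observe that $x+tH$ is ample for rational $t>t_0$ by Nakai--Moishezon, and derive $(x+t_0H)^2>0$ from $(x+t_0H)\cdot x\ge 0$ and $(x+t_0H)\cdot H>0$, a contradiction. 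Since both facts are standard and the statement being proved is itself Kleiman's theorem in the surface case, this is a presentational caveat rather than a gap, but if you intend the proof to be genuinely self-contained you should either run the largest-root argument directly or cite Kleiman's theorem outright rather than the ``nef plus ample'' corollary of it.
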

\begin{proof}
See \cite[Lemma II.2, page 1193]{Harb5}.
\end{proof}

The following result is also needed. We recall that a $(-1)$-curve,
respectively a $(-2)$-curve, is a smooth rational curve of
self-intersection $-1$, respectively $-2$.

\begin{lemma}
\label{fg}The monoid of effective divisor classes modulo linear
equivalence on a smooth projective rational surface $X$ having an
effective anticanonical divisor is finitely generated if and only if
$X$ has only a finite number of $(-1)$-curves and only a finite
number of $(-2)$-curves.
\end{lemma}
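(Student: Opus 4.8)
The plan is to exploit the fact that $M(X)$ is generated, as a monoid, by the classes of the integral curves on $X$, and then to sort these curves according to the sign of their self-intersection. I would record two elementary observations first. If $C$ is an integral curve with $C^{2}\geq 0$, then $C$ meets every integral curve, itself included, non-negatively, so its class is nef; hence every integral curve is either of negative self-intersection or nef. Secondly, if $C$ is integral with $C^{2}<0$, then any effective divisor linearly equivalent to $C$ must contain $C$ as a component (otherwise it would meet $C$ non-negatively, contradicting $C\cdot C<0$), so $h^{0}(X,\mathcal{O}_{X}(C))=1$ and $C$ is the unique effective representative of its class. Fixing an ample divisor $H$ shows that $H\cdot x>0$ for every nonzero effective $x$, so $0$ is the only unit of $M(X)$; consequently $M(X)$ has a well-defined set of indecomposable elements contained in every generating set, and the class of a negative curve is one of them (if $[C]=u+v$ with $u,v\neq 0$, effective representatives would sum to $C$ by uniqueness, contradicting the irreducibility of $C$).

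For the forward implication I would argue that, since each negative curve yields an indecomposable element, it must appear in the minimal generating set. Therefore, if $M(X)$ is finitely generated, $X$ carries only finitely many integral curves of negative self-intersection, in particular only finitely many $(-1)$-curves and $(-2)$-curves.

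For the converse, I would first turn the finiteness of $(-1)$- and $(-2)$-curves into finiteness of \emph{all} negative curves. Fix an effective anticanonical divisor $A=\sum a_{i}E_{i}\in|-K_{X}|$. If $C$ is integral with $C^{2}<0$ and $C$ is none of the finitely many components $E_{i}$, then $C\cdot(-K_{X})=\sum a_{i}(C\cdot E_{i})\geq 0$, so $C\cdot K_{X}\leq 0$; the adjunction formula $C^{2}+C\cdot K_{X}=2p_{a}(C)-2\geq -2$ then forces $C^{2}\geq -2$, whence $C^{2}\in\{-1,-2\}$ and a short check identifies $C$ as a $(-1)$- or $(-2)$-curve. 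Thus the set $\mathcal{N}$ of negative curves is finite. By the first observation every other integral curve is nef, and a nef class $F$ is effective: Lemma \ref{effcneffne} gives $F^{2}\geq 0$ and $h^{0}(X,\mathcal{O}_{X}(K_{X}-F))=0$, while $F\cdot(-K_{X})\geq 0$ because $-K_{X}$ is effective, so Riemann--Roch (Lemma \ref{RR}) yields $h^{0}(X,\mathcal{O}_{X}(F))\geq 1+\tfrac{1}{2}(F^{2}-F\cdot K_{X})\geq 1$. Hence $M(X)$ is generated by $\mathcal{N}$ together with the nef classes, and it remains to prove that the monoid of nef classes is finitely generated; by Gordan's lemma this is equivalent to the nef cone being rational polyhedral, and dually to the Mori cone $\overline{NE}(X)$ being rational polyhedral.

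The main obstacle is exactly this polyhedrality of $\overline{NE}(X)$, and it is where both hypotheses are genuinely used. The $K_{X}$-negative part of the cone is handled by the cone theorem in its surface form: its extremal rays are spanned by $(-1)$-curves, of which there are now only finitely many. The delicate point is the behaviour of $\overline{NE}(X)$ along the boundary of the positive cone, i.e. the $K_{X}$-trivial and $K_{X}$-non-negative directions, where a priori a ``round'' arc of nef isotropic classes could destroy polyhedrality. Here I would use that $-K_{X}$ is effective: a nef class $F$ with $F^{2}=0$ on such a ray satisfies $F\cdot(-K_{X})\geq 0$, and the cases $F\cdot K_{X}=0$ (forcing $F\cdot E_{i}=0$ for all $i$ and $p_{a}(F)=1$, an elliptic-fibre type class) and $F\cdot K_{X}<0$ are each pinned down by intersecting against the fixed anticanonical divisor $A$ and against the finitely many curves of $\mathcal{N}$, leaving only finitely many extremal rays. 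Making this last reduction airtight, so as to conclude that $\overline{NE}(X)$ is rational polyhedral and thereby apply Gordan's lemma, is the crux of the argument.
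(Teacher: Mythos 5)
Your forward implication is complete and correct, and in the converse you correctly reduce the problem, via adjunction against a fixed effective anticanonical divisor, to the finiteness of \emph{all} integral curves of negative self-intersection, and then, via Lemma \ref{effcneffne} and Riemann--Roch (Lemma \ref{RR}), to the finite generation of $Nef(X)$ as a monoid. But the step you yourself flag as ``the crux'' --- that the nef cone (equivalently, by duality, the closed effective cone) is rational polyhedral --- is precisely the substance of the lemma, and it is left unproved. This is a genuine gap, not a routine verification: as soon as the Picard number is at least $3$, the closed effective cone contains the round positive cone $\{x:\ x^{2}\geq 0,\ x\cdot H\geq 0\}$, so the finiteness of the negative curves does not by itself force polyhedrality; one must show that those finitely many curves, together with finitely many isotropic classes, span a rational polyhedral cone that swallows the positive cone. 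The cone theorem disposes only of the $K_{X}$-negative extremal rays (and even there they need not all be $(-1)$-curves --- think of $\mathbb{P}^{2}$ or a Hirzebruch surface, though in those cases the Picard number is too small for trouble). The dangerous region is the $K_{X}$-trivial boundary: when $K_{X}^{2}=0$ and $-K_{X}$ is nef this is an entire arc of the isotropic cone, and excluding infinitely many extremal rays there requires the structural results on anticanonical surfaces (essentially, that nef isotropic $K_{X}$-trivial classes are non-negative multiples of finitely many fibre-type classes). Your sketch of ``pinning down'' such classes by intersecting with $A$ and with $\mathcal{N}$ gestures at this but does not carry it out.

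For comparison: the paper offers no proof of its own here; it simply cites \cite[Corollary 4.2, page 109]{LH}, and the step missing from your argument is exactly the content of that reference (together with the results of \cite{Harb1} and \cite{Harb5} on which it rests). So your proposal correctly reconstructs the easy implication and the standard reductions of the hard one, but the theorem-grade input --- polyhedrality of the effective cone of an anticanonical rational surface with finitely many $(-1)$- and $(-2)$-curves --- is still outstanding, and quoting or reproving it is unavoidable.
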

\begin{proof}
See \cite[Corollary 4.2, page 109]{LH}.
\end{proof}

\subsection{Extremal Surfaces}
Let $Nef(S)$ denotes the set of  nef elements in the Picard group
$Pic(S)$ of a smooth projective surface $S$, it has obviously an
algebraic structure as a monoid. We define two more submonoids
$Char(S)$ and $[Char(S):Nef(S)]$  of $Pic(S)$  (see \cite{CG} and
\cite{GM}) as follows:

\begin{definition}
With notation as above.
\begin{enumerate}
\item The {\it characteristic monoid} $Char(S)$ of $S$ is the set of
elements $x$ in $Pic(S)$ such that there exists an effective divisor
on S whose associated complete linear system is base point free and
whose class in $Pic(S)$ is equal to $x$.

\item The {\it monoid of fractional base point free effective
classes} $[Char(S):Nef(S)]$ of $S$  is the set of elements $y$ in
$Pic(S)$ such that there exists a positive integer $n$ with $ny \in
Char(S)$.

\end{enumerate}
\end{definition}

The main properties that we are interested in regarding $Char(S)$
and $[Char(S):Nef(S)]$ of a smmoth projective surface are the ones
in the Lemma below. Their proofs are straightforward.

\begin{lemma}
With notation as above, the followings hold:
\begin{enumerate}
\item $Char(S)$ and $[Char(S):Nef(S)]$ are submonoids of $Nef(S)$.
\item $Char(S) \subseteq [Char(S):Nef(S)]$.
\end{enumerate}

\end{lemma}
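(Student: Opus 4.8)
The plan is to check both statements directly against the definitions, reducing everything to two elementary facts about base-point-free linear systems on the surface $S$. (i) \emph{Base-point-freeness implies nefness}: if the complete linear system $|D|$ of an effective divisor $D$ is base point free, then for any integral curve $C$ one can choose a member $D' \in |D|$ whose support does not contain $C$ (possible precisely because $|D|$ has no base point), so that $[D].C = D'.C \geq 0$, whence $[D] \in Nef(S)$. (ii) \emph{Base-point-freeness is stable under addition}: if $|D|$ and $|E|$ are both base point free, then so is $|D+E|$. Equivalently, the tensor product of two globally generated invertible sheaves is globally generated, since at any point $p$ one may pick sections $s$ of $\mathcal{O}_S(D)$ and $t$ of $\mathcal{O}_S(E)$ nonvanishing at $p$, and then $s \otimes t$ is a section of $\mathcal{O}_S(D+E)$ nonvanishing at $p$.

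Granting (i) and (ii), the first assertion follows formally. For $Char(S)$: the zero class lies in it because $\mathcal{O}_S$ is globally generated by the constant section $1$; and if $x=[D]$, $y=[E]$ with $D,E$ effective and $|D|,|E|$ base point free, then $D+E$ is effective and $|D+E|$ is base point free by (ii), so $x+y \in Char(S)$. Hence $Char(S)$ is a submonoid, and it is contained in $Nef(S)$ by (i). For $[Char(S):Nef(S)]$: the zero class qualifies with $n=1$; and if $n y_{1}, m y_{2} \in Char(S)$ for positive integers $n,m$, then, using that $Char(S)$ is closed under addition, $nm(y_{1}+y_{2}) = m(n y_{1}) + n(m y_{2}) \in Char(S)$, so $y_{1}+y_{2} \in [Char(S):Nef(S)]$. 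Its containment in $Nef(S)$ is immediate, since $ny \in Nef(S)$ forces $y.C = \tfrac{1}{n}(ny).C \geq 0$ for every integral curve $C$.

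The second assertion requires nothing beyond the definition: any $x \in Char(S)$ satisfies $x = 1\cdot x \in Char(S)$, so $x \in [Char(S):Nef(S)]$. As the authors remark, there is no genuine obstacle here; the only step worth stating carefully is fact (ii), the closure of base-point-freeness under sums, since it is exactly what makes $Char(S)$—and therefore its saturation $[Char(S):Nef(S)]$—closed under the group operation of $Pic(S)$.
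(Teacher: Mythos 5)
Your proof is correct and is exactly the ``straightforward'' verification the paper has in mind (the paper omits the proof entirely, declaring it straightforward): base-point-freeness gives nefness by moving a member off any given integral curve, closure under sums follows from global generation of tensor products, and the saturation $[Char(S):Nef(S)]$ inherits both properties by clearing denominators. No gaps.
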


Here we define the ingredient needed for our criterion:

\begin{definition}
With notation as above, $S$ is extremal if the monoid of fractional
base point free effective classes $[Char(S):Nef(S)]$ is maximal,
that is, if $Nef(S)=[Char(S):Nef(S)]$.
\end{definition}

\section{The Criterion}
Now we are able to state our geometric criterion:

\begin{theorem}
\label{criterion} Let $S$ be a smooth projective surface defined
over an algebraically closed field of arbitrary characteristic. The
following assertions are equivalents:
\begin{enumerate}

\item The Cox ring $Cox(S)$ is finitely generated.

\item $S$ satisfies the following two properties:
\begin{itemize}
\item[i.]  $S$ is extremal, and
\item[ii.] the effective monoid $M(S)$ of $S$ is finitely generated.
\end{itemize}

\item $S$ satisfies the following two properties:
\begin{itemize}
\item[i.] $S$ is extremal, and
\item[ii.] the nef monoid $Nef(S)$ of $S$ is finitely generated.
\end{itemize}
\end{enumerate}

\end{theorem}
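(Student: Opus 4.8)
The plan is to use the criterion of Galindo and Monserrat \cite{GM} as the engine and to translate its algebraic hypothesis into the geometric notion of extremality, while handling the passage between the effective and nef monoids by convex geometry. I would first settle $(2)\Leftrightarrow(3)$. Both assertions assume (i), so only the equivalence of the finite generation of $M(S)$ and of $Nef(S)$ is at stake. If $M(S)$ is finitely generated, the effective cone it spans is rational polyhedral; dualizing with respect to the intersection form (which identifies the cone of curves with the effective cone of divisors on a surface), the nef cone is rational polyhedral, and Gordan's lemma gives that the monoid $Nef(S)$ of nef lattice points is finitely generated. For the converse I would use extremality: if $Nef(S)$ is finitely generated then the nef cone, hence the dual Mori cone, is rational polyhedral, so $S$ carries only finitely many integral curves of negative self-intersection, since each such curve spans an extremal ray. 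Writing an arbitrary effective class through its Zariski decomposition as a nef part plus a nonnegative combination of these finitely many negative curves, and using that extremality makes each nef generator semiample, so that suitable integral multiples really carry sections, one obtains a finite generating set for $M(S)$ from the generators of $Nef(S)$ together with the negative curves.

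Next I would prove $(1)\Rightarrow(3)$. Since $S$ is integral, $Cox(S)$ is a domain graded by $Pic(S)$ whose support is exactly $M(S)$; choosing homogeneous algebra generators shows that the support of a finitely generated graded domain is precisely the monoid generated by their degrees, so $M(S)$ is finitely generated, and then $Nef(S)$ is finitely generated by the forward direction of $(2)\Leftrightarrow(3)$. To obtain extremality I would appeal to \cite{GM}: finite generation of $Cox(S)$ forces the finite generation of the naturally associated section algebras $\bigoplus_{x\in Char(S)}H^{0}(S,x)$ and $\bigoplus_{x\in Nef(S)}H^{0}(S,x)$, and their coincidence up to saturation says precisely that every nef class admits a base point free multiple, that is, $Nef(S)=[Char(S):Nef(S)]$. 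Thus $S$ is extremal.

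For the converse $(3)\Rightarrow(1)$, assume $S$ extremal with $Nef(S)$ finitely generated. As above, polyhedrality of the nef cone leaves only finitely many negative curves, which is the first hypothesis of the Galindo--Monserrat criterion. Extremality means that each of the finitely many generators of $Nef(S)$ is semiample and hence has a finitely generated section ring; assembling these over the finitely generated monoid $Nef(S)$ produces a single finitely generated $k$--algebra containing the two naturally associated algebras, which is the second hypothesis of \cite{GM}. Their criterion then yields that $Cox(S)$ is finitely generated.

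The step I expect to be the main obstacle is the identification, inside $(1)\Leftrightarrow(3)$, of the algebraic condition of \cite{GM} with extremality: showing that finite generation of the associated section algebras is equivalent to every nef divisor being semiample. The delicate classes are those on the boundary of the nef cone, of self-intersection zero, where semiampleness is not automatic and where $[Char(S):Nef(S)]$ can a priori fail to fill out all of $Nef(S)$; controlling these boundary classes, in arbitrary characteristic, is the crux on which the whole equivalence turns.
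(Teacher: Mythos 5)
Your proposal follows essentially the same route as the paper: $(2)\Leftrightarrow(3)$ by duality of the effective and nef cones, $(1)\Rightarrow(2)/(3)$ by reading $M(S)$ off the support of the $Pic(S)$-graded Cox ring and extracting extremality from the semiampleness of nef classes, and $(3)\Rightarrow(1)$ by invoking the criterion of Galindo and Monserrat \cite{GM}. You supply substantially more detail than the paper's one-paragraph sketch (Gordan's lemma, Zariski decomposition, explicit verification of the hypotheses of \cite{GM}), and you correctly isolate the same crux --- that finite generation of $Cox(S)$ forces every nef class to admit a base point free multiple --- which the paper likewise asserts without proof.
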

\begin{proof} By duality,  it is obvious that $(2)$ is equivalent to
$(3)$. Assume that the Cox ring $Cox(S)$ of $S$ is finitely
generated, it follows that the effective monoid $M(S)$ of $S$ is
finitely generated. On the other hand, if $y$ is an effective
element of $Nef(S)$, then  if $y$ belongs to $Char(S)$, we are done
and if not we may find some positive integer $s$ such that $sy$ is
an element of $Char(S)$. Hence $[Char(S):Nef(S)]$, i.e., $S$ is
extremal. Conversely, if  $S$ is extremal, and the nef monoid
$Nef(S)$ of $S$ is finitely generated, then it follows from
\cite{GM} that the Cox ring of $S$ is finitely generated.
\end{proof}


\section*{Acknowledgements}
 The first three authors were supported
by PAPIIT IN102008  research grant and  the projects CIC-UMSNH 2010
and 2011.

\end{document}